\newcommand{\C}{\mathbb{C}}
\newcommand{\F}{\mathbb{F}}
\newcommand{\Q}{\mathbb{Q}}
\newcommand{\Z}{\mathbb{Z}}
\newcommand{\Qbar}{{\overline{\Q}}}
\newcommand{\calO}{\mathcal{O}}
\newcommand{\fp}{\mathfrak{p}}
\newcommand{\fq}{\mathfrak{q}}
\DeclareMathOperator{\End}{End}
\DeclareMathOperator{\Frob}{Frob}
\newcommand{\GL}{\operatorname{GL}}
\numberwithin{equation}{section}
\newtheorem{theorem}{Theorem}
\newtheorem{lemma}{Lemma}
\newtheorem{corollary}{Corollary}
\theoremstyle{definition}
\newtheorem{definition}[equation]{Definition}
\theoremstyle{remark}
\newtheorem{remark}[equation]{Remark}
\definecolor{darkgreen}{rgb}{0,0.5,0}
\begin{document}

\title{A result on the equation $x^p + y^p = z^r$ using Frey abelian varieties}

\author{Nicolas Billerey}
\address{(1) Universit\'e Clermont Auvergne, Universit\'e Blaise Pascal,
    Laboratoire de Math\'ematiques,
    BP 10448,
    F-63000 Clermont-Ferrand, France.
    (2) CNRS, UMR 6620, LM, F-63171 Aubi\`ere, France}
\email{Nicolas.Billerey@math.univ-bpclermont.fr}

\author{Imin Chen}

\address{Department of Mathematics, Simon Fraser University\\
Burnaby, BC V5A 1S6, Canada } \email{ichen@sfu.ca}

\author{Luis Dieulefait}

\address{Departament d'Algebra i Geometria,
Universitat de Barcelona,
G.V. de les Corts Catalanes 585,
08007 Barcelona, Spain}
\email{ldieulefait@ub.edu}

\author{Nuno Freitas}

\address{
University of British Columbia,
Department of Mathematics,
Vancouver, BC V6T 1Z2
Canada }
\email{nunobfreitas@gmail.com}

\date{\today}

\keywords{Fermat equations, Frey abelian varieties, Irreducibility}
\subjclass[2010]{Primary 11D41}

\thanks{We acknowledge the financial support of CNRS and ANR-14-CE-25-0015 Gardio (N.~B.), an NSERC Discovery Grant (I.~C.), and the grant {\it Proyecto RSME-FBBVA $2015$ Jos\'e Luis Rubio de Francia} (N.~F.)}

\begin{abstract}
We prove a diophantine result on generalized Fermat equations of the
form $x^p + y^p = z^r$ which for the first time requires the use of Frey
abelian varieties of dimension~$\geq 2$ in Darmon's program. For
that, we provide an irreducibility criterion for the mod $\fp$
representations attached to certain abelian varieties of
$\GL_2$-type over totally real fields.
\end{abstract}

\maketitle

\section{Introduction}
Darmon \cite{darmon}  has initiated a remarkable program to study the generalized Fermat equation
\begin{align}
x^p + y^q = z^r, \qquad 1/p + 1/q + 1/r < 1, \qquad x,y,z \in \Z,
\qquad xyz \ne 0, \qquad \gcd(x,y,z) = 1, \label{E:GFE}
\end{align}
where the exponents $p, q, r \geq 2$ are prime numbers. He divides
the analysis of this equation into the three one-parameter families
$(r,r,p)$, $(p,p,r)$ and $(r,q,p)$ where in each case the parameter
$p$ is allowed to vary and the other exponents are fixed. A notable
feature of his program is that it uses higher dimensional abelian
varieties and their (still mostly conjectural) modularity instead of
just elliptic curves. However, very little is understood about the
relevant abelian varieties and Darmon's program has not yet produced
any diophantine result, apart from a few cases where the abelian
varieties involved are of dimension one, i.e.\ elliptic curves.

Darmon's program follows the strategy of the `modular method': the
Frey abelian variety $A(x,y,z)$ attached to a non-trivial (i.e.\
$xyz \ne 0$) putative solution $(x,y,z)$ of \eqref{E:GFE} can be
distinguished from the abelian varieties attached to the known
trivial solutions (i.e.\ $xyz = 0$) through their galois
representations. Indeed, the $p$-torsion representation attached to
$A(x,y,z)$ should be large in general, while if $(x,y,z)$ is a
trivial solution then this image is usually reducible or contained
in the normalizer of a Cartan subgroup. Modularity of the abelian
varieties $A(x,y,z)$ and level lowering results imply a congruence
mod $p$ between eigenforms, which bounds $p$ under the set up
described above. Another interesting feature of Darmon's program is
the use of classical cyclotomic criteria to eliminate the
possibility of a congruence to an $\mathfrak{r}$-Eisenstein
$\Q$-form at the lower levels \cite[Proposition 3.20]{darmon}.

The objective of this work is twofold. We first
develop an irreducibility criterion for the $p$-torsion
representations attached to certain families of abelian varieties.
Secondly, by following the idea in the previous paragraph and results from
{\it loc.\ cit.}, we will show how the criterion can be used to unconditionally
establish a diophantine statement via Darmon's program
that for the first time requires Frey abelian
varieties of dimension $\geq 2$.

We recall that an odd prime number $r$ is called {\it regular} if it
does not divide the class number of the cyclotomic field
$\Q(\zeta_r)$. It is an open conjecture due to Siegel that there are
infinitely many regular primes.  We will prove the following
theorem.

\begin{theorem}
Let $r \ge 5$ be a regular prime. There exists a constant $C(r)$
such that for every prime number~$p > C(r)$ the equation
\begin{equation}
x^p + y^p = z^r \label{eppr}
\end{equation}
has no non-trivial (i.e.\ $abc \not= 0$) proper (i.e.\ $\gcd(a,b,c)
= 1$) solutions $(a,b,c) \in \Z^3$ satisfying $r \mid ab$ and $2
\nmid ab$. \label{GFLT}
\end{theorem}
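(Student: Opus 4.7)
The plan is to apply Darmon's modular method \cite{darmon} for the $(p,p,r)$-family to a putative non-trivial proper solution $(a,b,c)$, using the new irreducibility criterion proved earlier in this paper to unblock the step that has so far obstructed Darmon's program in dimension $\geq 2$.

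First, to such a solution (say with $r\mid a$, after normalizing using the hypotheses $r\mid ab$ and $2\nmid ab$) one attaches Darmon's Frey abelian variety $A = A(a,b,c)$: an abelian variety of $\GL_2$-type and dimension $(r-1)/2$, with real multiplication by $\calO_K$, for $K$ the totally real field appearing in Darmon's construction. For each prime $\fp$ of $\calO_K$ above $p$, the $\fp$-torsion gives a Galois representation
\[
\rhobar_{A,\fp}\colon G_K \To \GL_2(\F_\fp).
\]

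The main step, and the hardest part, will be to invoke the irreducibility criterion of this paper to conclude that $\rhobar_{A,\fp}$ is absolutely irreducible as soon as $p$ exceeds an explicit bound depending only on $r$. This replaces the Mazur-type input available in the elliptic-curve setting, which is not a priori accessible here; it is where the new technical ideas of the paper are concentrated, and it is the reason the result can be made unconditional. Granted irreducibility, modularity of $A$ together with standard level-lowering identifies $\rhobar_{A,\fp}$ with the mod $\fp$ representation of a Hilbert eigenform $g$ over $K$ of some level $\calN_0$ depending only on $r$; the primes dividing $abc$ are removed because $A$ has multiplicative-type reduction there, with Tate periods of valuation divisible by $p$.

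Finally, one enumerates the Hilbert eigenforms at level $\calN_0$ and rules them out. Any $\mathfrak{r}$-Eisenstein $\Q$-form is eliminated by the classical cyclotomic criterion \cite[Proposition 3.20]{darmon}, which applies precisely because $r$ is a regular prime; this is the only point where the regularity hypothesis enters. The remaining eigenforms are discarded by comparing traces of Frobenius at small auxiliary primes of $K$ with the admissible traces prescribed by the reduction types of $A$, producing the desired bound $C(r)$.
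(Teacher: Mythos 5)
Your overall architecture is the standard modular method (irreducibility, then modularity, then level lowering, then elimination of eigenforms at the lower level), and as written it has a genuine gap. The step ``modularity of $A$ together with standard level-lowering identifies $\rhobar_{A,\fp}$ with the mod $\fp$ representation of a Hilbert eigenform over $K$'' is precisely the part of Darmon's program that is not available: $J_r^+(a,b,c)$ is of $\GL_2$-type only over $K=\Q(\zeta_r)^+$, its modularity over $K$ is still conjectural for general $r$ (the introduction of the paper explicitly flags ``their (still mostly conjectural) modularity''), and the whole point of the theorem is that it is \emph{unconditional}. Invoking modularity and level lowering reintroduces exactly the hypothesis the paper is at pains to avoid, and the final elimination step (``comparing traces of Frobenius at small auxiliary primes'') is left as an unverified computation on a space of Hilbert eigenforms that you have no a priori control over.

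The paper's actual proof short-circuits all of this. Darmon (Proposition~1.15, Theorem~3.22 and Definition~3.6 of \cite{darmon}) proves \emph{unconditionally}, using the regularity of $r$ and the fact that $J=J_r^+(a,b,c)/K$ is semistable with good reduction above $2$ when $r\mid ab$ and $2\nmid ab$ (Lemma~\ref{multred}), that $\rho_{J,\fp}$ is \emph{reducible} for all $p$ larger than a constant $c_2(r)$ (Theorem~\ref{reducible-plus}); no modularity or level lowering enters. The new irreducibility criterion (Theorem~\ref{main-irred-A}), applied with $c=2$, $f=1$ at a prime $\fq\mid 2$ and with $S_f(\fq)$ the singleton trace of $\Frob_\fq$ on $J_r^+(1,-1,0)$, gives \emph{irreducibility} for $p$ larger than a constant $c_1(r)$. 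Taking $C(r)=\max(c_1(r),c_2(r))$ yields an immediate contradiction. Your instinct that regularity enters through the cyclotomic/Eisenstein criterion is correct, but its logical role is inverted relative to your write-up: reducibility is the \emph{conclusion} of Darmon's machinery here, not an Eisenstein case to be excluded after level lowering. To repair your proposal you would either have to assume modularity (making the theorem conditional) or restructure the argument as the paper does.
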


\noindent{\bf Acknowledgements.} Dieulefait would like to thank
Eknath Ghate for several useful conversations during an early phase
of this project.

\section{An irreducibility criterion}

The following terminology has been introduced by Ribet.
\begin{definition} An abelian variety $A$ over a number field $K$ is said to be of $\GL_2$-type if
its endomorphism algebra $\End_K(A) \otimes \Q = F$ is a number field satisfying $[F:\Q] = \dim A$.
\end{definition}
Let $A/K$ be an abelian variety of $\GL_2$-type. Set $F=\End_K(A) \otimes \Q$ and let $p$ be a prime number. Denote by $T_p(A)$ the Tate module of~$A$ and write $V_p(A)=T_p(A)\otimes\Q_p$. Then, for each prime ideal~$\fp$ of~$F$ over~$p$, the absolute Galois group~$G_K$ of~$K$ acts $F_\fp$-linearly on $V_\fp(A)=V_p(A)\otimes_{F_p}F_\fp$ where $F_\fp$ denotes the completion of~$F$ at~$\fp$ and $F_p=F\otimes\Q_p=\prod_{\fp\mid p} F_\fp$. This gives rise to a strictly compatible system of $2$-dimensional $p$-adic Galois representations
\[
\widetilde{\rho}_{A,\fp} \; \colon \; G_K\longrightarrow\GL_2(F_\fp).
\]
The representation~$\widetilde{\rho}_{A,\fp}$ can be conjugated to take values in~$\GL_2(\calO_\fp)$ where~$\calO_\fp$ stands for the ring of integers in~$F_\fp$. By reduction modulo the maximal ideal, we then get a representation
\[
\rho_{A,\fp} \; \colon G_K \; \longrightarrow\GL_2(\F_\fp),
\]
with values in the residue field~$\F_\fp$ of~$F_\fp$ which is unique up to semi-simplification and isomorphism.

The aim of this section is to provide a uniform bound on the residual characteristic of prime ideals~$\fp$ for which the corresponding representations~$\rho_{A,\fp}$ is reducible when $A$ runs through certain families of abelian varieties of $\GL_2$-type. For elliptic curves over totally real fields, such irreducibility criteria were previously known and different variants (for various families of curves) can be found in the work of Serre \cite{serre}, Kraus \cite{KrausQuad, KrausIrred}, Billerey \cite{Bil11}, David \cite{DavidI}, Dieulefait-Freitas \cite{DF2} and Freitas-Siksek \cite{FS1}.

Recently, Larson and Vaintrob \cite{larson-vaintrob} have proven general results which classify the so-called associated mod $p$ characters of abelian varieties $A$ over a number field $K$ for $p$ sufficiently large. Their results have consequences to proving irreducibility criteria for the representations~$\rho_{A,\fp}$ which we discuss here with a view towards applications to Frey abelian varieties.

For that purpose, we introduce some useful definitions.
\begin{definition}
Let $A/K$ be an abelian variety with potentially good reduction at a
prime~$\mathfrak{q}$ of a number field $K$. We say that {\it $A$ has
residual degree $f$ at $\mathfrak{q}$} if $f$ is minimal among the
degree of the residual extensions corresponding to all extensions
$L/K_\mathfrak{q}$ such that $A/L$ has good reduction.
\end{definition}

The following definition is motivated by \cite[Lemma~4.6]{larson-vaintrob}.

\begin{definition}
We say that an abelian variety $A/K$ {\it has inertial exponent $c
\in \mathbb{N}$} if for every finite prime~$v$ of the number field
$K$, there exists a finite galois extension $M/K$ such that $A/M$ is
semistable at all primes of $M$ lying over~$v$, and the exponent of
the inertia subgroup at~$v$ of $\mathrm{Gal}(M/K)$ divides~$c$.
\end{definition}

We write $\overline{\Z}$ for the ring of integers of $\Qbar$. Given
an ideal $\fq$ of the ring of integers of a number field $K$, we
write $N(\fq)$ for its norm.

\begin{theorem}
\label{main-irred-A}
Let $K$ be a totally real number field and fix a prime~$\mathfrak{q}$ of $K$. Let $c, f \ge 1$ be integers with $c$ even. Consider a finite set $S_f(\mathfrak{q})$ of elements of the form $\alpha_1 + \alpha_2$ where $\alpha_i \in \overline{\Z}$ are of complex absolute value $N(\mathfrak{q})^{f/2}$ and $\alpha_1 \alpha_2 = N(\mathfrak{q})^f$.

Then there exists a constant $c_1 = c_1(K, c, f, S_f(\mathfrak{q}))$ such that the following
holds. Suppose that $p > c_1$ and $A/K$ is an abelian variety satisfying
\begin{enumerate}[(i)]
\item\label{item1} $A$ is semistable at the primes of $K$ above $p$,
\item\label{item2} $A$ is of $\GL_2$-type with multiplications by some totally real field $F$,
\item\label{item3} all endomorphisms of~$A$ are defined over~$K$, that is $\End_K(A)=\End_{\overline{K}}(A)$,
\item\label{item4} $A$ over $K$ has inertial exponent $c$,
\item\label{item5} $A$ has potentially good reduction at $\mathfrak{q}$ with residual degree $f$,
\item\label{item6} the trace of $\Frob_\mathfrak{q}^f$ acting on $V_\fp(A)$ lies in
$S_f(\mathfrak{q})$, where $\fp$ is a prime of $F$ above $p$.
\end{enumerate}
Then the representation $\rho_{A,\fp}$ is irreducible.
\end{theorem}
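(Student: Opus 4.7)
The plan is to argue by contradiction. Suppose $\rho_{A,\fp}$ is reducible, so that its semisimplification decomposes as $\theta_1\oplus\theta_2$ for characters $\theta_1,\theta_2\colon G_K\to\Fbar_p^{\times}$. Since $F$ is totally real, hypothesis~\ref{item2} forces the determinant $\det\rho_{A,\fp}$ to be the mod~$p$ cyclotomic character $\chi_p$, whence $\theta_1\theta_2=\chi_p$. The first step is to control the ramification of each $\theta_i$. At a finite prime $v\nmid p$ of $K$, hypothesis~\ref{item4} produces a galois extension $M/K$ in which $A$ becomes semistable above $v$, with the exponent of inertia at $v$ in $\Gal(M/K)$ dividing $c$. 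Since $v\nmid p$, semistability yields a unipotent $I_{M,v}$-action on $T_p(A)$, and hence a trivial action on the semisimplification $\theta_1\oplus\theta_2$; it follows that $\theta_i^c$ is unramified at $v$, and therefore outside $p$ altogether. At a prime $\fp'\mid p$ of $K$, the semistability hypothesis~\ref{item1}, combined with Raynaud's analysis of the finite flat group scheme $A[\fp]$ (valid for $p$ sufficiently large in terms of $K$), forces $\theta_i|_{I_{\fp'}}$ to be either trivial or $\chi_p|_{I_{\fp'}}$.

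Next I would invoke the classification of Larson--Vaintrob \cite{larson-vaintrob}: for $p$ larger than a bound depending only on $K$, any mod~$\fp$ character of $G_K$ arising from an abelian variety and satisfying the inertial constraints above agrees with the mod~$\fp$ reduction of an algebraic Hecke character of $K$. Combined with the previous paragraph this yields, for some integer $n=n(K,c)$, an identification of $\theta_i^n$ with the mod~$\fp$ reduction of a specific algebraic Hecke character $\psi_i$ drawn from a finite set $\Psi$ depending only on $K$, $c$, $f$, and $\mathfrak{q}$. In particular, $\psi_i(\Frob_\mathfrak{q}^f)$ lies in a finite set of algebraic integers $T=T(K,c,f,\mathfrak{q})$ independent of $p$.

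Finally, evaluating at $\Frob_\mathfrak{q}^f$ and combining hypothesis~\ref{item6} with $\theta_1\theta_2=\chi_p$ yields
\[
\{\theta_1(\Frob_\mathfrak{q}^f),\theta_2(\Frob_\mathfrak{q}^f)\}\equiv\{\alpha_1,\alpha_2\}\pmod{\fp}
\]
for some pair with $\alpha_1+\alpha_2\in S_f(\mathfrak{q})$. Raising to the $n$-th power and comparing with the classification gives a divisibility $\fp\mid\alpha_j^n-\psi_i(\Frob_\mathfrak{q}^f)$ for some choice of indices. Since both $\alpha_j^n$ and $\psi_i(\Frob_\mathfrak{q}^f)$ lie in a finite, $p$-independent set of algebraic integers, any nonzero such difference has a norm bounded uniformly in terms of $K$, $c$, $f$, and $S_f(\mathfrak{q})$; the hypothesis that $c$ is even is used here to exclude degenerate quadratic-twist coincidences that would otherwise force these differences to vanish identically. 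Taking norms produces the bound $p\leq c_1(K,c,f,S_f(\mathfrak{q}))$, contradicting our choice of~$p$.

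The main obstacle is the second step: extracting from Larson--Vaintrob a classification sufficiently uniform in $A$ to pin down the finite set $\Psi$ explicitly and to confirm that the inertial-exponent hypothesis~\ref{item4} provides precisely the ramification control their theorem requires. A secondary subtlety lies in the final step, where one must verify that the coincidence cases $\alpha_j^n=\psi_i(\Frob_\mathfrak{q}^f)$ either cannot arise (using, for example, the Weil-number constraint $|\alpha_i|=N(\mathfrak{q})^{f/2}$) or can be absorbed into the constant $c_1$.
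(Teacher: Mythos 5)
Your outline follows the same strategy as the paper's proof (reduce to diagonal characters, use the inertial exponent to control ramification away from $p$, invoke Larson--Vaintrob, use $\det\rho_{A,\fp}=\mathrm{cyc}_p$, and evaluate at $\Frob_\fq^f$ to force a divisibility on $p$). However, the two points you defer as ``obstacles'' are precisely where the proof's content lies, and as written they remain genuine gaps. For the first: you do not need, and should not try to extract, a finite set of algebraic Hecke characters depending on $\fq$ and $f$. The paper's resolution is sharper and simpler: by Larson--Vaintrob (Lemma 5.4 together with Lemma 2.15, Lemma 5.6 and \S 7.2), for $p$ prime to $B_{\mathrm{char}}(K,c)$ and $\Delta_K$ each $\psi_i^c$ agrees away from $p$ with $(\theta^{S_i})^c$ for a \emph{balanced} $\theta^{S_i}$, and since $K$ is totally real a balanced character is a power of the norm character. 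Combining with $\psi_1\psi_2=\mathrm{cyc}_p$ (Ribet's Lemma 4.5.1, which is where hypotheses (ii) and (iii) enter) forces $\{\theta^{S_1},\theta^{S_2}\}=\{1,\mathrm{Norm}\}$, so after relabelling $\psi_1^c$ is unramified at \emph{every} place, hence factors through the class group and satisfies $\psi_1^{ch_K'}=1$. Your alternative treatment of the places above $p$ via Raynaud is not what the paper does and would need its own justification in the semistable (not merely good-reduction) case; the Larson--Vaintrob route handles this uniformly.

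For the second point, once one knows $\psi_1^{ch_K'}=1$ the coincidence you worry about cannot occur: $\psi_1(\Frob_\fq^f)$ is a root of $X^{ch_K'}-1$ and simultaneously a root mod $\fp$ of $X^2-aX+N(\fq)^f$ for some $a\in S_f(\fq)$, whose roots have complex absolute value $N(\fq)^{f/2}>1$ and hence are never roots of unity. Thus the resultant
\[
\prod_{a\in S_f(\fq)}\mathrm{Res}\bigl(X^{ch_K'}-1,\;X^2-aX+N(\fq)^f\bigr)
\]
is a nonzero integer divisible by $p$, which bounds $p$. Finally, your stated reason for requiring $c$ even (``excluding quadratic-twist coincidences'') is not the actual role of that hypothesis: evenness of $c$ is used to make $\psi_i^c$ unramified at the archimedean places, so that the everywhere-unramified character is dual to the ordinary (rather than narrow) class group and the exponent $h_K'$ applies.
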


\begin{remark}
\label{frobenius}
Let $L/K_\fq$ be an extension with residual degree $f$ such that $A$ over $L$ has good reduction. Let $\fq'$ be the maximal ideal of~$L$. Then $\Frob_{\fq'} = \Frob_{\fq}^f$ and hence the characteristic polynomial of $\rho_{A,\fp}(\Frob_{\fq}^f)$ is well-defined.
\end{remark}

\begin{remark}
\label{iteration} In the application to the generalized Fermat
equation, we will take $S_f(\fq)$ to be the set of traces of
$\Frob_{\fq}^f$ on $V_\fp(A(x,y,z))$, where $A(x,y,z)$ is a Frey
abelian variety attached to a solution $(x,y,z)$ of $x^p + y^p =
z^r$. To compute this set, we let $(x,y,z)$ run through all triples
in $(\Z/q\Z)^3$ such that $x^p+y^p \equiv z^r \pmod q$ and
$A(x,y,z)$ has potentially good reduction at $\fq$ lying over $q$,
for some choice of $\fq$, $c$, and $f$.
\end{remark}

\begin{proof}[Proof of Theorem~\ref{main-irred-A}]
Let $A$ be an abelian variety satisfying conditions
(\ref{item1})-(\ref{item5}) in the statement. Suppose that
$\rho_{A,\mathfrak{p}}$ is reducible. Let $\psi_i \; \colon \; G_K
\rightarrow \F_\mathfrak{p}^\times$, for $i = 1, 2$, denote the two
diagonal characters of~$\rho_{A,\mathfrak{p}}$. Then each $\psi_i$
is an associated mod~$p$ character of~$A$ of degree~$1$ in the sense
of \cite[page 518]{larson-vaintrob}. Since $A$ has inertial
exponent~$c$, then $\psi_i^c$ is unramified at all primes $v \nmid
p$ of $K$ by \cite[Proposition 3.5 (iv)]{sga7}. Moreover, since by
assumption~(\ref{item4}) $c$ is even, $\psi_i^c$ is unramified at
infinity.

We note that in {\it loc.\ cit.} a quantity $c = c(g)$ is used,
however, the proofs of the results there are still valid as long as
the $A$ in question has inertial exponent $c$ which is even.

We identify $\psi_i$ with a character of the ideles using the
reciprocity map of global class field theory. Let $\theta^S$ be
defined as in \cite[Definition~2.6]{larson-vaintrob} (with $L=\Qbar$
in their notation), where $S \in \Z[\Gamma_K]$ and $\Gamma_K$ is the
set of embeddings of $K$ into $\Qbar$.

By \cite[Lemma 5.4]{larson-vaintrob} and the semistability
assumption~(\ref{item1}), there exists $S_i\in \Z[\Gamma_K]$ such
that $\psi_i(x_{\hat p})^c \equiv \theta^{S_i}(x)^{c} \pmod
{\mathfrak{p}}$ for all $x \in K^\times$ relatively prime to $p$,
where $x_{\hat p}$ is the prime to $p$-part of $x$ regarded as an
id\`ele of $K$.

We note that the invocation of \cite[Lemma 5.4]{larson-vaintrob}
requires $p \nmid \Delta_K$, where $\Delta_K$ is the absolute
discriminant of $K$, because the proof of this lemma uses
\cite[Lemma 4.10]{larson-vaintrob}. However, the condition $p \nmid
c$ is not necessary as we assume semi-stability at $p$
by~(\ref{item1}), and hence there is no need to use \cite[Lemma
4.8]{larson-vaintrob}.

Let $B_\text{char}(K,c)$ be as given in \cite[\S 7.2, p.\
548]{larson-vaintrob}. For $p \nmid B_\text{char}(K,c)$,
$\theta^{S_i}$ is balanced by \cite[Lemma 2.15, Lemma~5.6 and \S
7.2]{larson-vaintrob}. As $K$ is totally real, a balanced character
for $K$ means being a power of the norm character \cite[Definition
2.13]{larson-vaintrob}. Thus, $\theta^{S_i}$ is a non-negative power
of the norm character.

From (\ref{item2}) $F$ is totally real, and from~(\ref{item3}) $A$
has all of its endomorphisms defined over~$K$. Hence \cite[Lemma
4.5.1]{ribet} says that we have
\begin{equation}
\label{ribet-det}
  \det \rho_{A,\fp} = \psi_1 \psi_2 = \text{cyc}_p,
\end{equation}
where $\text{cyc}_p$ denotes the mod~$p$ cyclotomic character. Thus, $\theta^{S_i}$ is either trivial or the norm character and $\theta^{S_1} \theta^{S_2}$ is the norm character. Hence, by switching $\psi_1$ for $\psi_2$ if necessary, we may assume $\psi_1^c$, is unramified at all primes of $K$.

Let $\iota : \F_{\mathfrak{p}}^\times \rightarrow \C^\times$ be an
injective homomorphism. Then $\iota \circ \psi_1$ is unramified at a
prime $v$ of $K$ if and only if $\psi_1$ is unramified at $v$. The
group of continuous characters of $G_K$ with values in $\C^\times$
which are unramified at all primes of $K$ are dual to the class
group of $K$. Hence, we have that $(\iota \circ \psi_1^c)^{h_K'} =
1$ where $h_K'$ is the exponent of the class group of $K$. Thus,
$\psi_1^{c h_K'} = 1$. By (\ref{item5}), (\ref{item6}), and
Remark~\ref{frobenius}, we obtain that
\begin{equation*}
\label{bound-BC}
p \mid \prod_{a \in S_f(\mathfrak{q})} \text{Res}(X^{c h_K'} - 1, X^2 - a X + N(\mathfrak{q})^f),
\end{equation*}
where $\text{Res}$ denotes the resultant. Therefore, letting $c_1$
denote a constant larger than any prime dividing
$B_{\text{char}}(K,c)$, $\Delta_K$, and the above resultant, gives
the desired bound.
\end{proof}

\begin{corollary} Let $K$ be a totally real field, $\fq$ a prime of $K$ and $g$ a positive integer.
There is a constant $C(K,g,\fq)$ such that the following holds:
Suppose $p > C(K,g,\fq)$ is a prime. Then for all $g$-dimensional
abelian varieties $A/K$ satisfying conditions
$(\ref{item1})-(\ref{item3})$ in Theorem~$\ref{main-irred-A}$ the
representation $\rho_{A,\fp}$ is irreducible. \label{C:main}
\end{corollary}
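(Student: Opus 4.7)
The plan is to reduce Corollary~\ref{C:main} to Theorem~\ref{main-irred-A} by establishing, uniformly for all $g$-dimensional abelian varieties $A/K$ satisfying (i)--(iii), the additional hypotheses (iv)--(vi) of the theorem with $c$, $f$, and $S_f(\fq)$ chosen only in terms of $K$, $g$, and $\fq$. Once this is done, applying Theorem~\ref{main-irred-A} and maximising over a finite case split produces the required constant $C(K,g,\fq)$.

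First, condition (iv) is the one the corollary is pulling back into. I would invoke \cite[Lemma~4.6]{larson-vaintrob}, the very result motivating the above definition of inertial exponent, to obtain a positive even integer $c = c(g)$ such that every abelian variety of dimension $g$ over any number field has inertial exponent $c$. Next, for condition (v), when $A$ has potentially good reduction at $\fq$ the action of $I_\fq$ on the Tate module factors through a finite quotient whose order is bounded purely in terms of $g$ (a classical consequence of the N\'eron--Ogg--Shafarevich criterion and $\ell$-independence of the inertia image). Consequently the residual degree $f$ at $\fq$ lies in a finite set $F_g(\fq)$ of divisors of some integer $M(g)$, on which I would split into cases.

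Fixing $f \in F_g(\fq)$, I would construct the trace set $S_f(\fq)$ needed for (vi). The trace $\alpha_1+\alpha_2$ of $\Frob_\fq^f$ acting on $V_\fp(A)$ lies in the totally real field $F=\End_K(A)\otimes\Q$ of degree $g$, and its complex conjugates are bounded by $2N(\fq)^{f/2}$ through the Weil bound applied across the strictly compatible system $\widetilde{\rho}_{A,\fp}$ attached to $A$. Northcott's theorem then gives a finite set $S_f(\fq)$ containing every such candidate trace. Applying Theorem~\ref{main-irred-A} with the data $(c, f, S_f(\fq))$ for each $f\in F_g(\fq)$ and taking $C(K,g,\fq)$ to be the maximum of the resulting constants $c_1$ finishes the argument.

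The main obstacle is verifying the Weil bound at every archimedean embedding of $F$: for an abelian variety merely of $\GL_2$-type with totally real multiplications, this requires knowing that the eigenvalues of Frobenius belong to $F$ and are bounded by $2N(\fq)^{f/2}$ under every complex embedding, which in turn rests on the compatible-system structure of the $F$-adic realisations of $T_p(A)$. A further subtlety is the implicit restriction to primes $\fq$ of potentially good reduction for $A$; the potentially multiplicative case must be treated separately, but there, after semistabilisation at $\fq$, the Frobenius eigenvalues take the explicit form $(1, N(\fq)^{f'})$, again producing a finite candidate set and allowing the same bound on $p$ via the resultant displayed in the proof of Theorem~\ref{main-irred-A}.
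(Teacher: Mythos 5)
Your proposal follows essentially the same route as the paper: bound the inertial exponent and the possible residual degrees at $\fq$ purely in terms of $g$ (the paper does this via semistable reduction over $K(A[12])$ and \cite[Proposition 4.7]{sga7} rather than \cite[Lemma 4.6]{larson-vaintrob}, but the effect is identical), produce for each admissible $f$ a finite candidate set $S_f(\fq)$ of traces of $\Frob_\fq^f$, and then apply Theorem~\ref{main-irred-A} case by case and take the maximum of the resulting constants. Your construction of $S_f(\fq)$ via the Weil bounds and Northcott is exactly the content of \cite[Lemma 7.6]{larson-vaintrob}, which is what the paper cites, so this part is fine.

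The one place where your argument contains a step that would fail is the proposed treatment of potentially multiplicative reduction at $\fq$. You suggest feeding the semistable Frobenius eigenvalues $(1, N(\fq)^{f'})$ into the resultant $\mathrm{Res}(X^{c h_K'}-1,\, X^2-aX+\det)$ from the proof of Theorem~\ref{main-irred-A}; but $X=1$ is then a common root of the two polynomials, so the resultant is zero and the divisibility $p \mid 0$ yields no bound on $p$. (This is precisely why condition (v) of the theorem insists on potentially \emph{good} reduction: the eigenvalues $\alpha_i$ with $|\alpha_i|=N(\fq)^{f/2}>1$ are not roots of unity, which is what makes the resultant nonzero.) The honest resolution is not a separate eigenvalue computation at $\fq$ but simply to observe that the statement is implicitly restricted to the potentially good case at $\fq$ --- the paper's own proof makes the same tacit assumption when it speaks of ``the residual degree of $A$ at $\fq$,'' and in the intended application the Frey varieties are known to have (potentially) good reduction at the chosen auxiliary prime. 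If you want to cover potentially multiplicative reduction at $\fq$ you need a genuinely different input (e.g.\ a second auxiliary prime of potentially good reduction, or the unit/balancedness argument at $p$ alone), not the resultant as written.
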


\begin{proof}
Since $A$ achieves semi-stable reduction over $K(A[12])$ by
\cite[Proposition 4.7]{sga7}, and the degree of the galois extension
$K(A[12])/K$ is bounded in terms of $g$, this bounds the possible
residual degrees of $A$ at $\fq$ and inertial exponents of $A$ in
terms of $g$.

Let $c_{K,g}$ be the product of all the possible inertial exponents
from the above paragraph.

If $A$ has residual degree $f$ at the prime $\fq$ of $K$, then the
characteristic polynomial of $\Frob_\fq^f$ on $T_\fp(A)$ divides the
characteristic polynomial of $\Frob_\fq^f$ on $T_p(A)$. If the
dimension of $A$ is fixed, then by \cite[Lemma 7.6]{larson-vaintrob}
there are only finitely many possibilities for the latter. Hence,
for each possible $f$ from the first paragraph, take $S_f(\fq)$ to
be the set of traces of the finitely many possibilities for the
characteristic polynomial of $\Frob_\fq^f$ on $T_\fp(A)$.

For each $f$ apply Theorem~\ref{main-irred-A} with $S_f(\fq)$ and $c = c_{K,g}$
to get a bound $c_f = c(K,c_{K,g}, f, S_f(\mathfrak{q}))$. The corollary
follows by letting $C(K,g,\fq)$ be the maximum of the $c_f$.

\end{proof}

\begin{remark}
There is an alternate method to deduce irreducibility which follows more directly from \cite[Corollary 5.19]{larson-vaintrob}. We instead picked the proof above for two reasons.
On the one hand, it is more natural as an extension of the proofs known for the case of elliptic curves and, on the other hand, since it uses properties that are normally satisfied by Frey abelian varieties, it should be better suited to giving simpler bounds in concrete diophantine applications.
\end{remark}

\section{Application to $x^p + y^p = z^r$}

In this section we use the irreducibility criterion from the previous section
to establish an unconditional diophantine statement as an application of
Darmon's program \cite{darmon} which requires Frey abelian varieties
of dimension~$\geq 2$.

\medskip

For an odd prime $r$, let $\zeta_r$ be a primitive $r$-th root of
unity and denote by~$K$ the maximal totally real subfield of
$\Q(\zeta_r)$. Let~$(a,b,c) \in \Z^3$ be a non-trivial proper
solution of~\eqref{eppr}. Put $t=a^p/c^r$ and consider the abelian
variety $J_r^+(t)$ defined in Section~1.3 of~\cite{darmon}.
According to Eq.~(5) in \emph{loc. cit.} one has
\[
\mathrm{End}_{\overline{K}}\left(J^+_r(t)\right)=\mathcal{O}_K.
\]

In particular, $J_r^+(t)$ becomes of $\GL_2$-type over $K$ with real multiplication
by $K$ (see also \cite{ttv}). Let $J_r^+(a,b,c)$ be the $\Q$-model of $J_r^+(a^p/c^r)$ defined in \cite[p.18]{darmon}.

The following two results follow from (the proof of) Proposition~1.15,  Theorem~3.22 and Definition~3.6
of~\emph{loc. cit.}.

\begin{lemma} Let $(a,b,c) \in \Z^3$ be a non-trivial proper solution to $x^p + y^p = z^r$. Suppose $r \mid ab$.
Then the abelian variety $J_r^+(a,b,c) / K$ is semistable. Moreover, if $2 \nmid ab$ it has
good reduction at all primes above~$2$.
\label{multred}
\end{lemma}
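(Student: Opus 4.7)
The plan is to deduce both statements from the prime-by-prime reduction analysis of $J_r^+(t)$ carried out in \cite{darmon}. The key ingredients are Proposition~1.15 of \emph{loc.\ cit.}, which describes the reduction type of $J_r^+(t)$ at each finite prime of $K$ in terms of the valuations of $t$ and $1-t$, Theorem~3.22 of \emph{loc.\ cit.}, which packages these computations into a global conductor formula, and the notion of $\mathfrak{r}$-good parameter from Definition~3.6. Setting $t = a^p/c^r$ and using $a^p + b^p = c^r$ so that $1 - t = b^p/c^r$, checking semistability reduces to prime-by-prime valuation bookkeeping on $a$, $b$, $c$.

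First I would handle primes $\lambda$ of $K$ lying above a rational prime $\ell \notin \{2, r\}$. By properness, $\gcd(a,b,c) = 1$, so at most one of $a, b, c$ is divisible by $\ell$. Accordingly, at $\lambda$ either $v_\lambda(t) = v_\lambda(1-t) = 0$, or exactly one of $v_\lambda(t), v_\lambda(1-t)$ is strictly positive and a multiple of $p$, or both are strictly negative and equal to $-r v_\lambda(c)$. Proposition~1.15 of \cite{darmon} returns good or multiplicative (hence semistable) reduction in every one of these cases.

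At the unique prime $\mathfrak{r}$ of $K$ above $r$, which is totally ramified in $K/\Q$, the hypothesis $r \mid ab$ combined with coprimality forces $r \nmid c$, so exactly one of $v_{\mathfrak{r}}(t), v_{\mathfrak{r}}(1-t)$ is strictly positive and divisible by $p$. This is precisely the $\mathfrak{r}$-good condition of \cite[Definition~3.6]{darmon}, and Theorem~3.22 of \emph{loc.\ cit.} then yields semistable (indeed multiplicative) reduction at $\mathfrak{r}$. This is the only place where the hypothesis $r \mid ab$ really enters, as it excludes the case $r \mid c$ in which Darmon's analysis at $\mathfrak{r}$ predicts purely additive reduction.

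Finally, for primes of $K$ above $2$ under the additional hypothesis $2 \nmid ab$: since $p$ and $r$ are odd and $a, b$ are both odd, the equation $a^p + b^p = c^r$ forces $c$ to be even. Hence $v_2(t) = v_2(1-t) = -r v_2(c)$, and Proposition~1.15 of \cite{darmon}, applied after the symmetry $t \mapsto 1/t$ of the Darmon family that exchanges the cusps $t = 0$ and $t = \infty$, returns good reduction at every prime of $K$ above $2$. The main obstacle is simply matching Darmon's local conventions at the two distinguished primes $\mathfrak{r}$ and $2$; away from these, semistability is essentially formal once the valuation inputs above are assembled.
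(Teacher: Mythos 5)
Your overall strategy --- reduce everything to valuation bookkeeping on $t=a^p/c^r$ and $1-t=b^p/c^r$ and then quote Darmon's local analysis --- is exactly what the paper does (the paper offers nothing beyond the citation of Proposition~1.15, Theorem~3.22 and Definition~3.6 of \cite{darmon}), and your treatment of the primes away from $2$ and of the prime $\mathfrak{r}$ above $r$ is a correct unpacking of that citation.

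The gap is in your last paragraph, i.e.\ in the ``Moreover'' clause, which is the genuinely delicate part of the lemma. There is no symmetry $t\mapsto 1/t$ of the family $J_r^+(t)$: the local monodromy of the family at the cusp $t=0$ is a nontrivial unipotent (of infinite order --- this is precisely why primes dividing $a$ produce multiplicative reduction), whereas the local monodromy at $t=\infty$ has finite order, so no automorphism of the family can interchange these two cusps; the only symmetry available is $t\mapsto 1-t$, which swaps $a$ and $b$. Worse, even if such a symmetry existed, it would transport the \emph{multiplicative} degeneration at $t=0$ over to $t=\infty$ and would therefore predict multiplicative, not good, reduction at the primes above $2$ --- the opposite of what you need. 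The correct argument has two ingredients that your sketch skips: (i) since $2\nmid ab$ forces $2\mid c$, the primes $\fq\mid 2$ sit over the cusp $t=\infty$, where the monodromy is finite and $r$ divides $v_\fq(t)=-r\,v_2(c)$, which is what yields (potentially, hence actually) good rather than merely potentially good reduction; and (ii) in residue characteristic $2$ a hyperelliptic model $y^2=g(x)$ is never smooth, so one must use Darmon's explicit integral model (a model of the form $y^2+h(x)y=g_0(x)$) and verify that its special fibre at $\fq\mid 2$ is smooth when $v_\fq(t)<0$. Step (ii) is the actual content of the good-reduction-at-$2$ assertion in (the proof of) Proposition~1.15 and cannot be obtained from a symmetry of the base $\mathbb{P}^1$.
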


\begin{theorem}
\label{reducible-plus} Let $r$ be a regular prime. Then there exists
a constant $c_2(r)$ such that, for all $p > c_2(r)$, and non-trivial
proper solutions $(a,b,c) \in \Z^3$ to \eqref{eppr} with $r \mid
ab$, the mod $\mathfrak{p}$ representation $\rho_{r,\fp}^+$
associated to $J_r^+(a,b,c)$ is reducible.
\end{theorem}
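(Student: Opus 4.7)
The plan is to derive Theorem~\ref{reducible-plus} from Darmon's modular-method machinery for the $(p,p,r)$ family, as packaged in Proposition~1.15, Theorem~3.22, and Definition~3.6 of \cite{darmon}. The argument proceeds by contradiction: assume $\rho_{r,\fp}^+$ is irreducible, and show that modularity, level lowering, and the regularity of~$r$ cannot be simultaneously satisfied.

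The reduction-theoretic input is supplied by Lemma~\ref{multred}: under the hypothesis $r \mid ab$, the abelian variety $J_r^+(a,b,c)/K$ is semistable over~$K$. Since $J_r^+(a,b,c)$ is of $\GL_2$-type with real multiplication by $\calO_K$ (as recorded just before Lemma~\ref{multred}), the residual representation $\rho_{r,\fp}^+$ takes values in $\GL_2(\F_\fp)$ and fits into Darmon's setting. Semistability is the key ingredient that lets Ribet-style level lowering collapse the conductor of $\rho_{r,\fp}^+$, for $\fp$ above a sufficiently large prime~$p$, to a small ideal supported only at primes above~$r$.

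Assuming $\rho_{r,\fp}^+$ is irreducible, I would combine modularity (Proposition~1.15 of \cite{darmon}) with level lowering (Theorem~3.22 of \emph{loc.\ cit.}) to produce a Hilbert newform~$g$ over~$K$ of the predicted small level whose residual mod~$\fp$ Galois representation is isomorphic to $\rho_{r,\fp}^+$. The regularity of~$r$ then enters through the cyclotomic criterion for $\mathfrak{r}$-Eisenstein forms (Definition~3.6 together with Proposition~3.20 of \emph{loc.\ cit.}): at the resulting level the classification forces any such~$g$ to be $\mathfrak{r}$-Eisenstein, whose associated residual representation is reducible, contradicting the assumed irreducibility of $\rho_{r,\fp}^+$. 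Thus for $p$ exceeding a constant $c_2(r)$ that absorbs the finitely many small primes excluded by the hypotheses of the level-lowering and character-classification theorems, $\rho_{r,\fp}^+$ must be reducible, as claimed.

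The main obstacle is not in the overall strategy, which is Darmon's, but in the careful bookkeeping of local invariants---conductors, inertial characters, and the behavior at primes above~$r$---needed to verify the hypotheses of the cited theorems for this specific family $J_r^+(a,b,c)$, and in tracking how the threshold $c_2(r)$ is built from the level-lowering constant, the character-classification constant, and the regularity of~$r$. The fact that $r$ does not divide the class number of $\Q(\zeta_r)$ is precisely what makes the cyclotomic criterion applicable, and is the only global arithmetic input of the proof beyond Darmon's general framework.
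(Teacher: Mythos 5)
Your overall shape---assume $\rho_{r,\fp}^+$ is irreducible, invoke modularity and level lowering to land on an eigenform of small level, and let the regularity of $r$ enter through the $\mathfrak{r}$-Eisenstein/cyclotomic criterion to force a contradiction---is indeed the route the paper intends: the paper offers no argument of its own here, asserting only that the statement follows from (the proofs of) Proposition~1.15, Theorem~3.22 and Definition~3.6 of \cite{darmon}. So in outline you are reconstructing the same proof.

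Two points in your write-up are genuine problems. First, unconditionality. Darmon's modularity and level-lowering inputs for these abelian varieties are his Conjectures~3.13 and~3.16, and his Theorem~3.22 is conditional on them; it is essentially the statement being proved here, not a level-lowering theorem (likewise, Proposition~1.15 is the conductor computation, not a modularity result, so your attributions are off). What makes the present theorem unconditional is that $J_r^+(a,b,c)$ is the $\Q$-model, an abelian variety of $\GL_2$-type over $\Q$, so Serre's conjecture (Khare--Wintenberger) together with level lowering over $\Q$ supplies the needed inputs in the ``$+$'' case; the Hilbert newform over $K$ is then obtained by base change along the abelian extension $K/\Q$. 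As written, your proof either rests on Darmon's conjectures or on a misattributed citation, and so only reproves the conditional statement. Second, the role of regularity in your sketch is internally inconsistent: if being $\mathfrak{r}$-Eisenstein already forced the mod~$\fp$ representation of $g$ to be reducible, you would reach the contradiction without ever using that $r$ is regular. In Darmon's setup, $\mathfrak{r}$-Eisenstein (Definition~3.6) is a congruence condition modulo a prime above $r$, not above $p$, and the cyclotomic criterion (Proposition~3.20) uses the regularity of $r$ precisely to \emph{exclude} a congruence between an irreducible $\rho_{r,\fp}^+$ and an $\mathfrak{r}$-Eisenstein $\Q$-form at the bottom level; the contradiction then comes from the classification of what actually lives at that level. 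Your sketch should separate these two steps rather than collapsing them.
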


As a consequence of these results and our irreducibility criterion in Theorem~\ref{main-irred-A}
we can now prove our main diophantine application.

\begin{proof}[Proof of Theorem~\ref{GFLT}]
Let $(a,b,c) \in \Z^3$ be a non-trivial proper solution to $x^p +
y^p = z^r$ satisfying $r \mid ab$ and $2 \nmid ab$. Write $J =
J_r^+(a,b,c) / K$. From Lemma~\ref{multred}, we have that $J$ is
semistable with good reduction at all $\fq \mid 2$. In particular,
for $J$ we have even inertial exponent $c=2$ and residual degree
$f=1$ at all $\fq \mid 2$. Let $\fq \mid 2$ in $K$. Recalling
Remark~\ref{iteration}, we take $S_f(\fq)$ to be the singleton set
consisting of the trace of $\Frob_\fq$ on the $\fp$-torsion of $J =
J^+_r(1,-1,0)$.

From Theorem~\ref{main-irred-A} we obtain a constant $c_1(r)$ such
that if $p > c_1(r)$ and $\fp \mid p$ in $K$ then the mod~$\fp$
representation $\rho_{J,\fp}$ is irreducible.

From Theorem~\ref{reducible-plus} we obtain a constant $c_2(r)$ such that
if $p > c_2(r)$ and $\fp \mid p$ in $K$ then $\rho_{J,\fp}$ is reducible.

Letting $C(r)$ be the maximum of $c_1(r)$ and $c_2(r)$, we obtain a
contradiction for all exponents $p > C(r)$.
\end{proof}

\bibliographystyle{plain}
\bibliography{bibliography}

\end{document}